\DeclarePairedDelimiter{\ceil}{\lceil}{\rceil} 
\newtheorem{theorem}{Theorem}
\newtheorem{lemma}{Lemma}
\newtheorem{proposition}{Proposition}
\newtheorem*{claim*}{Claim}
\newtheorem{claim}{Claim}
\title{Intersecting longest paths in chordal graphs}
\author{Daniel J. Harvey
\and Michael S. Payne\thanks{
La Trobe University, Bendigo, Australia.  
}}
\begin{document}
\maketitle

\begin{abstract}
We consider the size of the smallest set of vertices required to intersect every longest path in a chordal graph. Such sets are known as longest path transversals. We show that if $\omega(G)$ is the clique number of a chordal graph $G$, then there is a transversal of order at most $4\ceil{\tfrac{\omega(G)}{5}}$. 
 We also consider the analogous question for longest cycles, and show that if $G$ is a 2-connected chordal graph then there is a transversal intersecting all longest cycles of order at most $2\ceil{\tfrac{\omega(G)}{3}}$. 

\end{abstract}

\section{Introduction} 

In this paper we consider the paths of maximum length among all paths in a graph. As longest paths represent the most indirect way one could travel through the graph from some place to another, we will adopt the terminology of Kapoor et al.~\cite{KapoorEtal,JobsonEtal} and refer to them as \emph{detours}. 
It is not too difficult to convince yourself that any two detours in a connected graph must intersect. In 1966 Gallai asked whether all detours in a connected graph share a common vertex~\cite{Erdosproc}. 
While the answer turned out to be negative for graphs in general, this question has led to a large number of positive results for particular graph classes.

In the negative direction, Walther gave a counterexample with 25 vertices soon after the question was posed~\cite{Walther}. The smallest possible counterexample is obtained from the Petersen graph by breaking one vertex into three degree one vertices~\cite{Zamfirescu76,WaltherVoss}. Thomassen showed there are infinitely many planar counterexamples (the `hypotraceable' planar graphs)~\cite{Thomassen76}.
The known counterexamples contain sets of at least seven detours that do not have a common intersection. It remains an open question whether every $k$ detours in any connected graph have a vertex in common, for $k = 3,4,5,6$~\cite{Zamfirescu01,Voss}.

On the other hand, there are a number of classes of connected graphs for which it has been proven that all detours intersect in some vertex. These include trees, cacti~\cite{KlavzarPet}, split graphs~\cite{KlavzarPet}, circular arc graphs~\cite{BalisterEtal,Joos}, outerplanar graphs~\cite{RezendeEtal13,Axenovich}, 2-trees~\cite{RezendeEtal13}, graphs with all non-trivial blocks Hamiltonian~\cite{RezendeEtal13}, partial 2-trees~\cite{ChenEtal17}, connected dually chordal graphs~\cite{JobsonEtal}, cographs~\cite{JobsonEtal} and more~\cite{CerioliLima20,GolanShan18,ChenFu}. The motivating question for the present research is whether chordal graphs (and thus all $k$-trees) can be added to this list~\cite{West}. Recall that a graph is \emph{chordal} if every cycle of four or more vertices contains a chord, or in other words, the only induced cycles are triangles.

Even if there is no single vertex intersecting every detour, we may ask for the smallest \emph{transversal} of vertices that intersects all detours~\cite{RautenbachSereni}. This is the approach we take in our study of chordal graphs, and 
the main result of this paper is the following, where $\omega(G)$ is the clique number.
\begin{theorem}
\label{theorem:chordal}
Given a connected chordal graph $G$, there exists a transversal of order at most $4\ceil{\tfrac{\omega(G)}{5}}$ that intersects every detour. Moreover, this transversal induces a clique in $G$.
\end{theorem}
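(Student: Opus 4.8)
The plan has two parts: first locate a single maximal clique that meets every detour, then show that roughly four fifths of that clique already does the job.

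\emph{A central clique.} Fix a clique tree $T$ of $G$: a tree whose nodes are the maximal cliques (``bags'') of $G$ such that for each vertex $v$ the bags containing $v$ form a subtree $T_v$. If $H$ is a connected subgraph of $G$, then $\bigcup_{v\in V(H)}T_v$ is again a subtree of $T$, because the two ends of every edge of $H$ lie in a common bag. Applying this with $H$ a detour $D$ yields a subtree $T_D$ consisting of exactly the bags that meet $D$. Since $G$ is connected, any two detours share a vertex, so the family $\{T_D\}$ is pairwise intersecting, and by the Helly property for subtrees of a tree the $T_D$ have a common node $B$. By construction every detour meets $B$; as $B$ is a clique, $B$ is itself a transversal inducing a clique, with $|B|\le\omega(G)$. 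Hence if $|B|\le 4\ceil{\tfrac{\omega(G)}{5}}$ (in particular if $\omega(G)\le 4$) we are already done, and in general it suffices to find $S\subseteq B$ with $|S|\le 4\ceil{\tfrac{\omega(G)}{5}}$ still meeting every detour; such an $S$ automatically induces a clique. So from now on assume $|B|\ge 5$.

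\emph{Pruning the clique.} Root $T$ at $B$. Its children correspond to the minimal separators $S_1,\dots,S_m\subsetneq B$; writing $W_i$ for the set of vertices occurring below $S_i$, one has $W_i\cap B=S_i$, no edges between $W_i\setminus S_i$ and $(B\setminus S_i)\cup\bigcup_{j\ne i}(W_j\setminus S_j)$, and consequently every component of $G-B$ lies inside exactly one $W_i\setminus S_i$. For a detour $D$ put $X_D=V(D)\cap B\neq\emptyset$. The key observation is that a detour with $|X_D|$ small is very rigid, precisely because $B$ is a clique: if some $u\in B\setminus V(D)$ is adjacent to an endpoint of $D$, or to two consecutive vertices of $D$, then $D$ can be lengthened, contradicting that it is a longest path. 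Chasing this through the separator structure one shows, for example, that if $X_D=\{v\}$ then $v$ is an interior vertex of $D$ and the components of $G-B$ met by $D$ can only attach to $B$ within a tightly restricted neighbourhood of $v$; analogous, longer, rerouting arguments handle $|X_D|\in\{2,3,4\}$. Using these restrictions, the plan is to partition $B$ into $\ceil{\tfrac{|B|}{5}}$ groups of size at most $5$, chosen compatibly with the separators $S_i$ and the attachment data above, and then to delete one vertex from each group of size $5$. Degenerate configurations in which such a deletion would expose a detour are to be removed in advance by a cut-vertex reduction: if some $v\in B$ is a cut vertex of $G$ one passes to the side of $v$ carrying the detours and inducts on $|V(G)|$. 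The surviving set $S$ then satisfies $|S|\le |B|-\lfloor |B|/5\rfloor\le 4\ceil{\tfrac{\omega(G)}{5}}$ and meets every detour.

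\emph{Where the difficulty lies.} The first part is routine. The real work is the second: proving that detours meeting $B$ in few vertices must attach to $B$ in the rigid manner claimed (a case analysis of how a longest path can enter and leave a clique and its pendant branches repeatedly, together with the cut-vertex reductions needed to kill small bad examples), and then arranging a single partition of $B$ into blocks of five that is simultaneously safe for all such detours, so that discarding one vertex per block never leaves a detour entirely inside the discarded set. I expect this simultaneous-compatibility step to be the main obstacle.
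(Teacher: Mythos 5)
There is a genuine gap, and it sits exactly where you say it does. Your first step (Helly property of subtrees gives a bag $B$ met by every detour) is correct and coincides with a preliminary remark in the paper, but it only produces a \emph{total} clique, and everything after that is the actual content of the theorem. Two problems. First, nothing guarantees that the Helly bag $B$ is the right clique to prune: the paper has to work with a ``central'' clique --- a total clique with specific properties concerning detours that touch or cross it and meet it in at most $\ceil{\tfrac{\omega(G)}{5}}$ vertices --- and its existence requires a separate argument (orienting an edge out of every bag of the tree decomposition toward the offending detours and extracting a directed $2$-cycle); moreover the central clique that comes out of that argument may be the \emph{intersection} of two adjacent bags rather than a bag itself. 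Your proposal fixes $B$ once and for all and never rules out that $B$ admits no transversal of the required size among its subsets.

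Second, and more seriously, your pruning mechanism (partition $B$ into blocks of five and delete one vertex per block, ``compatibly'' with all detours) is not an argument but a restatement of the difficulty, as you acknowledge. The paper's mechanism is structurally different and is the idea you are missing: call a detour \emph{small} if it meets the central clique $X$ in at most $\ceil{\tfrac{\omega(G)}{5}}$ vertices, select at most \emph{four} small detours whose crossing/touching behaviour and home components are prescribed by a short case analysis, and let the transversal $F$ be the union of their traces in $X$, so $|F|\le 4\ceil{\tfrac{\omega(G)}{5}}$ automatically. Then a large detour disjoint from $F$ would force $|X|\ge 4\ceil{\tfrac{\omega(G)}{5}}+\ceil{\tfrac{\omega(G)}{5}}+1>\omega(G)$, a contradiction, and a small detour missing $F$ is killed by exchange arguments: its pieces on either side of the clique-cut $X$ can be reassembled with pieces of the selected detours, using edges of the clique, into strictly longer paths. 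Your ``rigidity'' observations (an endpoint of a detour in $B$, or two consecutive detour vertices in $B$, force all of $B$ onto the detour) are correct and correspond to the paper's Proposition~\ref{prop:extend}, but they do not by themselves control where a detour meeting $B$ in one vertex may sit, and the ``simultaneous partition safe for all detours'' you would need is precisely what the selection-of-four-witnesses argument replaces. As written, the proposal does not close.
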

%
The order of the smallest transversal intersecting all detours in a chordal graph was recently studied by Cerioli et al.~\cite{CerioliEtal20}.
They showed that there is a transversal of order at most $\max\{1,\omega(G)-2\}$.
Theorem~\ref{theorem:chordal} is an improvement on their result for $\omega = 15,19,20,23,24,25$ and $\omega \geq 27$.


The intersection properties of longest cycles in graphs have also been investigated, perhaps to a lesser extent. We also prove the following.
\begin{theorem}
\label{theorem:cycles}
Given a $2$-connected chordal graph $G$, there exists a transversal of order at most $2\ceil{\tfrac{\omega(G)}{3}}$ that intersects every longest cycle of $G$. Moreover, this transversal induces a clique in $G$.
\end{theorem}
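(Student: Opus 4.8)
The plan is to follow the same strategy that (presumably) underlies Theorem~\ref{theorem:chordal}, but adapted to cycles and exploiting $2$-connectivity to gain a better constant. First I would fix a clique tree (or a tree decomposition whose bags are the maximal cliques) of the chordal graph $G$, and pick a longest cycle $C^*$ of $G$, say of length $\ell$. The key structural observation is that $C^*$, being a cycle in a chordal graph, must pass through the bags of the clique tree in a controlled way: if we root the clique tree, then $C^*$ restricted to the subtree hanging below any single edge $e$ of the clique tree enters and leaves through the separator associated to $e$, which is a clique of size at most $\omega(G)$. I would quantify this by a weight function on the edges of the clique tree — for edge $e$ with separator $S_e$, let $w(e)$ be the number of edges of $C^*$ crossing $S_e$ (equivalently, twice the number of ``excursions'' of $C^*$ into the subtree below $e$) — and then use a standard tree-centroid / balancing argument to locate an edge or vertex of the clique tree where the cycle is, in an appropriate sense, balanced.

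The heart of the argument is then a \emph{swapping/exchange} lemma: having found a balanced separator $S$ (a clique of size at most $\omega(G)$), I want to show that a well-chosen sub-clique $T\subseteq S$ of size at most $2\ceil{\omega(G)/3}$ already meets every longest cycle. The mechanism is that if some longest cycle $C$ avoids $T$, then $C$ is confined (essentially) to one side of the separator together with at most $|S\setminus T|$ vertices of the separator; using $2$-connectivity one can find two vertex-disjoint paths from $C$ through $S$ into the other side, and splice in a detour through the heavy side to produce a strictly longer cycle, contradicting maximality. The factor $2/3$ (rather than the $4/5$ of the path case) should come from the fact that for cycles we get \emph{two} disjoint connecting paths ``for free'' from $2$-connectivity and from the cyclic (rather than linear) structure, so the counting in the exchange step closes up with a smaller surviving fraction of the clique; I would organise this as: partition $S$ into three roughly equal parts, argue that a longest cycle cannot avoid two of the three parts simultaneously, and take $T$ to be the union of two parts.

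I expect the main obstacle to be making the balancing step and the exchange step compatible — specifically, ensuring that the separator $S$ produced by the centroid argument is genuinely ``heavy on both sides'' in a way strong enough to drive the splicing, while also being a single clique (so that $T$ induces a clique, as claimed). There is a real tension here: the centroid of the clique tree need not correspond to a bag that $C^*$ uses efficiently, so I would likely need to iterate or to choose among $O(1)$ candidate bags near the centroid, and to handle degenerate cases where the longest cycle is short relative to $\omega(G)$ (e.g. $\ell \le \omega(G)$, where a single bag contains a longest cycle and the bound is easy, or where $G$ itself is close to a single clique). A secondary technical point is the base case / connectivity bookkeeping: $2$-connectivity must be preserved (or at least the relevant two-disjoint-paths property) when we restrict attention to one side of the separator, which may require using Menger's theorem locally rather than a global $2$-connectivity hypothesis on a subgraph.

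\begin{proof}[Proof sketch]
See the plan above; the full argument proceeds by (i) choosing a longest cycle $C^*$ and a clique tree of $G$; (ii) locating a balanced separator clique $S$ via a weighted centroid argument on the clique tree; (iii) partitioning $S$ into three nearly-equal parts and showing, via a $2$-connectivity-based splicing/exchange lemma, that omitting any two parts from a transversal is impossible; and (iv) concluding that the union of two of the three parts, a clique of size at most $2\ceil{\tfrac{\omega(G)}{3}}$, meets every longest cycle.
\end{proof}
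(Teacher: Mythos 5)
Your proposal diverges from the paper's argument in two essential places, and both divergences open real gaps.

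First, the selection of the separator. You locate a single bag by a weighted centroid argument relative to one fixed longest cycle $C^*$. But the transversal must meet \emph{every} longest cycle, and a bag that is balanced with respect to $C^*$ need not even intersect all other longest cycles, let alone sit well with respect to them for an exchange argument. The paper instead runs a global argument over all bags of a clique tree: out of each bag it orients one edge of the tree, toward the component containing a longest cycle disjoint from that bag (if the bag is not ``total'') or toward the common home component of the small touching longest cycles (if it is). Since every node gets out-degree one in a tree, some edge is oriented both ways, and a case analysis on that $2$-cycle produces a \emph{central clique} — a total clique-cut with specific properties relative to the entire family of longest cycles. Nothing in your centroid step delivers an object with these guarantees.

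Second, and more seriously, the mechanism behind the factor $2/3$. You propose to tripartition the separator clique $S$ into near-equal parts and argue that no longest cycle can avoid two of the three parts. This claim is unsupported and, as stated, false: a longest cycle crossing a clique separator may meet it in as few as two vertices, and those two vertices can lie in a single part of an arbitrary tripartition, so the cycle avoids the other two parts. The paper's $2/3$ arises differently. Call a longest cycle \emph{small} with respect to $X$ if it meets $X$ in at most $\ceil{\tfrac{\omega(G)}{3}}$ vertices. The transversal is the trace on $X$ of at most \emph{two carefully chosen small longest cycles} (padded to size $2\ceil{\tfrac{\omega(G)}{3}}$ inside $X$). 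A \emph{large} cycle meets $X$ in more than $\ceil{\tfrac{\omega(G)}{3}}$ vertices, so it cannot be disjoint from a $2\ceil{\tfrac{\omega(G)}{3}}$-subset of a clique of order at most $\omega(G)$ — that is pure counting. A small cycle is then forced to meet one of the two chosen cycles \emph{inside} $X$ by pairwise-intersection and splicing lemmas (two longest cycles crossing $X$ and disjoint in $X$ can be recombined into two longer cycles using edges of $X$; two touching cycles with different home components can only meet in $X$). Your exchange step gestures at splicing ``a detour through the heavy side'' into a cycle avoiding $T$, but the recombination only works when the two cycles are disjoint in $X$, and you have not set up the argument so that this disjointness is the hypothesis being contradicted. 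Without replacing the tripartition idea by the small-cycle-trace construction (or something equivalent), the proof does not close.
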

For large $\omega$ this improves on recent results of Guti\'errez~\cite{Gutierrez}, who showed that a 2-connected chordal graph $G$ contains a transversal intersecting all longest cycles of order at most $\max\{1,\omega(G)-3\}$.
Thus Theorem~\ref{theorem:cycles} is an improvement for $\omega = 12$ and $\omega \geq 14$.


We also note the following result about the order of a transversal of detours in a planar graph. Cerioli et al.~\cite{CerioliEtal20} showed that, given a connected graph $G$ with treewidth $k$, there exists a transversal intersecting every detour of order at most $k$. Fomin and Thilikos~\cite{FominThil06} showed that a planar graph of order $n$ has treewidth at most $3.182\sqrt{n}$. Together these give the following result.  

\begin{theorem}
\label{theorem:planar}
Given a connected planar graph $G$ with $n$ vertices, there exists a transversal intersecting every detour of order at most $3.182\sqrt{n}$.
\end{theorem}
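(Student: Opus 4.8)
The plan is to observe that Theorem~\ref{theorem:planar} is simply the composition of the two results quoted immediately before it, so the proof amounts to checking that their hypotheses line up. I would first recall the two ingredients precisely. From Cerioli et al.~\cite{CerioliEtal20}: every connected graph of treewidth $k$ admits a transversal of order at most $k$ meeting all of its detours. From Fomin and Thilikos~\cite{FominThil06}: every planar graph on $n$ vertices has treewidth at most $3.182\sqrt{n}$.

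Now let $G$ be a connected planar graph on $n$ vertices and set $k = \operatorname{tw}(G)$. Planarity gives $k \le 3.182\sqrt{n}$ by the Fomin--Thilikos bound. Since $G$ is connected of treewidth $k$, the Cerioli et al. result produces a transversal $T$ with $|T| \le k$ that intersects every detour of $G$. Chaining the two inequalities, $|T| \le k \le 3.182\sqrt{n}$, which is the claimed bound.

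The only matters needing attention are bookkeeping: confirming that no hypothesis beyond connectedness and planarity is used here (in particular neither $2$-connectedness nor chordality is invoked, unlike in Theorems~\ref{theorem:chordal} and~\ref{theorem:cycles}), and being consistent about whether ``width $k$'' refers to bags of size $k$ or $k+1$ — I would adopt whichever convention makes the cited transversal bound read as stated, and note that the constant $3.182$ in the conclusion is inherited verbatim from~\cite{FominThil06} with no further optimization. There is no real obstacle here: the statement is recorded as a corollary, and essentially all of the work resides in the two external references.
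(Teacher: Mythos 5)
Your proposal is correct and is exactly the argument the paper intends: Theorem~\ref{theorem:planar} is stated as an immediate corollary of the Cerioli et al.\ treewidth-transversal bound and the Fomin--Thilikos bound on the treewidth of planar graphs, chained precisely as you describe. No further comment is needed.
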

 This improves a previous result of Rautenbach and Sereni~\cite{RautenbachSereni}. 


\section{Preliminaries}
Given a graph $G$, a \emph{tree decomposition} of $G$ consists of a tree $T$ and a collection of bags $\mathcal{B}$ indexed by the nodes of $T$ such that:
\begin{itemize}
\item the bags of $\mathcal{B}$ contain vertices of $V(G)$,
\item for each $v \in V(G)$, the set of bags containing $v$ correspond to a non-empty connected subtree of $T$,
\item for each edge $vw \in E(G)$ there is at least one bag containing both $v$ and $w$.
\end{itemize}
The width of a tree-decomposition is the order of the largest bag, minus 1. The treewidth of a graph $G$ is the minimum width over all tree decompositions of $G$. Technically, we should distinguish a node of the tree $T$ and the bag of $\mathcal{B}$ indexed by the node, but in practice there is rarely any need, and as such we conflate the two.

The following key result about tree decompositions of chordal graph we shall use extensively: 
\begin{lemma}\label{lemma:tdcliques}
If $G$ is a chordal graph, then $G$ has a tree decomposition of minimum width in which every bag corresponds to a clique.
\end{lemma}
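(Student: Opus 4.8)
The plan is to start from any tree decomposition $(T,\mathcal{B})$ of $G$ of minimum width $k = \operatorname{tw}(G)$ and modify it so that every bag is a clique, without increasing the width. The key fact I would use is the standard observation that in any tree decomposition, for every clique $K$ of $G$ there is a single bag containing all of $K$ (this follows from the Helly property of subtrees of a tree, applied to the subtrees $\{t : v \in B_t\}$ for $v \in K$, which pairwise intersect because each edge of $K$ lies in a common bag). In particular, since $G$ is chordal, $\omega(G) \le k+1$, and conversely every bag has size at most $k+1$, so the bags we want are exactly the maximum cliques together with possibly smaller cliques.

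First I would recall the characterization of chordal graphs via clique trees (Gavril, Buneman, Walter): a graph is chordal if and only if it has a \emph{clique tree}, that is, a tree $T'$ whose nodes are the maximal cliques of $G$ such that for every vertex $v$ the maximal cliques containing $v$ form a subtree of $T'$. Then I would verify directly that a clique tree \emph{is} a tree decomposition: the first axiom is immediate, the second is the defining property of a clique tree, and the third holds because every edge $uv$ lies in some maximal clique. Since every bag of a clique tree is a (maximal) clique, its width is $\omega(G)-1 \le k$; combined with the lower bound $\operatorname{tw}(G) \ge \omega(G)-1$ that holds for all graphs, the clique tree has width exactly $k$, so it is a minimum-width tree decomposition in which every bag is a clique.

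Alternatively, if one wants to avoid quoting the clique-tree theorem, I would argue by local surgery on a minimum-width decomposition: if some bag $B_t$ is not a clique, pick non-adjacent $u,v \in B_t$; by the Helly argument above the subtrees for $u$ and for $v$ cannot both pass through $t$ in a way that forces them to meet, so one can split $t$ or shrink $B_t$ along the tree, removing $u$ or $v$ from $B_t$, and repeat. The only real subtlety — and the step I expect to be the main obstacle — is checking that this surgery keeps the three tree-decomposition axioms intact (in particular connectivity of each vertex's subtree) and that it terminates; this is precisely the content of the clique-tree existence proof, so in the write-up I would simply cite it (e.g.\ Blair--Peyton or Golumbic) rather than reprove it. Either way the width never goes up, since we only ever delete vertices from bags, and every resulting bag is a clique by construction.
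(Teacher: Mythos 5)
Your proposal is correct and follows essentially the same route as the paper, which also invokes Gavril's characterization of chordal graphs (as subtree intersection graphs / clique trees) and omits the details. You are in fact slightly more careful than the paper in checking that the resulting decomposition has minimum width, via the standard bound $\operatorname{tw}(G) \ge \omega(G)-1$.
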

This result is well-known, and can be proven multiple ways. A result of Gavril showed that the chordal graphs are equivalent to the \emph{subtree graphs}: the intersection graph of subtrees of a tree. It is reasonably clear that the collection of subtrees corresponding to a chordal graph $G$ can be used to construct a tree decomposition for $G$ with the desired properties; we omit the full proof.

Now we consider the behaviour of the detours in a chordal graph. We start with an extremely straight-forward result which will be helpful later.
\begin{proposition}
\label{prop:extend}
Let $G$ be a connected chordal graph, $X$ a clique in $G$ and $P$ a detour in $G$. If $P$ has either an end vertex in $X$ or contains an edge of $X$, then $P$ must contain every vertex of $X$.
\end{proposition}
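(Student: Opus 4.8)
The plan is to argue by contradiction: suppose $P$ is a detour that meets the clique $X$ in the required way but misses some vertex $x \in X$. The idea is that $x$ is adjacent to every vertex of $X$ (since $X$ is a clique), so we can splice $x$ into $P$ to obtain a strictly longer path, contradicting maximality.

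First I would handle the case where $P$ has an end vertex $u \in X$. Since $x \in X$ and $ux \in E(G)$, and $x \notin V(P)$, simply append $x$ to that end of $P$; this yields a path with one more vertex than $P$, contradicting that $P$ is a detour. Second, consider the case where $P$ contains an edge $uv$ of $X$ (with $u,v$ consecutive on $P$) but $x \notin V(P)$. Write $P = u_1 u_2 \cdots u_k$ and suppose $u = u_i$, $v = u_{i+1}$. Since $x$ is adjacent to both $u_i$ and $u_{i+1}$, replace the edge $u_i u_{i+1}$ by the two edges $u_i x$ and $x u_{i+1}$; that is, form $P' = u_1 \cdots u_i\, x\, u_{i+1} \cdots u_k$. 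Because $x \notin V(P)$, this $P'$ is a genuine path, and it has $k+1$ vertices, again contradicting maximality of $P$. Hence no such $x$ exists, i.e. $V(X) \subseteq V(P)$.

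I do not expect any real obstacle here — the statement is essentially immediate once one observes that a clique vertex missing from the path can always be inserted, either at an endpoint or in the middle of an edge of $X$ lying on the path. The only thing to be careful about is checking that the modified walk is actually a path (no repeated vertices), which is exactly where the hypothesis $x \notin V(P)$ is used; chordality and connectedness of $G$ play no role in this particular proposition beyond guaranteeing detours exist.
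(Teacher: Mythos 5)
Your argument is correct and is essentially the same as the paper's: insert the missing clique vertex either by appending it to an end vertex of $P$ lying in $X$, or by replacing an edge of $P$ inside $X$ with the two-edge path through the missing vertex, contradicting maximality. Your added remark that chordality and connectedness play no role here is also accurate.
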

\begin{proof}
If $P$ has an end vertex in $X$ or contains an edge of $X$ but $v \in X$ is not in $P$, we could extend $P$ to contain $v$ either by adding an edge from the end vertex to $v$, or by replacing an edge of $P$ in $X$ with a 2-edge path containing $v$. Since $P$ is a detour, this is a contradiction.
\end{proof}

A \emph{clique-cut} in $G$ is a clique that is also a cut-set.
Suppose $P$ is a detour and $X$ is a clique such that $V(P) \cap X \neq \emptyset$. 
\begin{itemize}
\item If all the vertices of $V(P)-X$ are in a single component of $G-X$ then we say $P$ \emph{touches} $X$ (or is \emph{touching} with respect to $X$). Alternatively, if $X$ is a clique-cut and there are vertices of $V(P)-X$ in at least two components of $G-X$ then we say that $P$ \emph{crosses} $X$ (or is  \emph{crossing} with respect to $X$). 
\item If $P$ has both end vertices in the same component of $G-X$ then we say $P$ is \emph{closed} with respect to $X$, and we call the component of $G-X$ containing both end vertices the \emph{home component} of $P$ with respect to $X$. Alternatively, if the end vertices of $P$ are in different components we say $P$ is \emph{open} with respect to $X$.
\item If $|V(P) \cap X| \leq \ceil{\tfrac{\omega(G)}{5}}$ then we say $P$ is \emph{small} with respect to $X$, otherwise it is \emph{large}.
\end{itemize}
Often the clique $X$ will be clear from context, in these cases we will often drop ``with respect to $X$". We call a clique \emph{total} if every detour intersects $X$, otherwise it is \emph{non-total}. When $X$ is total, every detour is either small or large, and either closed touching, closed crossing or open crossing. It is not possible from the definition for a detour to be open and touching. 


Recall a \emph{separation} is a partition of the vertex set $(M,X,N)$ such that no edge exists with an end vertex in $M$ and in $N$. Note that if $B$ is a non-leaf bag in a tree decomposition, then $G-B$ is disconnected and $(M,B,N)$ is a separation where the components of $G-B$ are sorted into $M$ and $N$. Given a path $P$, let $f(P,M)$ denote the number of end vertices of the path in $M$ (and define $f(P,N)$ equivalently). Clearly $f(P,M),f(P,N) \in \{0,1,2\}$, and if $P$ has no end vertices in $X$, then $f(P,M)+f(P,N)=2$. 

\begin{lemma}
\label{lemma:paste}
Let $(M,X,N)$ be a separation in $G$ where $X$ is a cut-clique. If there exist two detours $P,Q$ in $G$ such that 
\begin{enumerate}
\item neither ends in $X$, 
\item both intersect both of $M$ and $N$, and
\item $f(P,M)=f(Q,M)$
\end{enumerate}
then $(P \cap X) \cap (Q \cap X) \neq \emptyset$.
\end{lemma}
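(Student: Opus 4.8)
The plan is to argue by contradiction. Suppose $(P\cap X)\cap(Q\cap X)=\emptyset$; the aim is to build from $P$ and $Q$ a path with strictly more vertices than a detour. Write $m(H)=|V(H)\cap M|$, $n(H)=|V(H)\cap N|$ and $x(H)=|V(H)\cap X|$ for a subgraph $H$. By hypothesis~(2) each of $P,Q$ meets both $M$ and $N$, so all six of these numbers are at least $1$; in particular each of $P,Q$ uses a vertex of $X$. Since $(M,X,N)$ and $(N,X,M)$ are both separations across the cut-clique $X$ and hypothesis~(3) is symmetric under interchanging $M$ and $N$, we may assume $k:=f(P,M)=f(Q,M)\in\{0,1\}$ (if $k=2$, swap the names $M$ and $N$).

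The key tool is a \emph{compression}. From the detour $P$, form $P^M$ by deleting every vertex of $P$ lying in $N$ and, for every maximal subpath of $P$ contained in $N$ both of whose neighbours on $P$ lie in $X$, adding the edge of $G$ joining those two neighbours (they lie in the clique $X$, so are adjacent). Since $P$ has no edge between $M$ and $N$, one checks that $P^M$ is a path with vertex set $V(P)\cap(M\cup X)$, and that exactly $f(P,N)=2-k$ of its two endpoints lie in $X$ (the endpoints of $P$ that lay in $N$ being truncated back to the nearest vertex of $X$). Define $Q^N$ symmetrically by deleting the vertices of $Q$ lying in $M$: it is a path with vertex set $V(Q)\cap(N\cup X)$, exactly $f(Q,M)=k$ of whose endpoints lie in $X$. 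The contradiction hypothesis gives $V(P^M)\cap V(Q^N)=(V(P)\cap X)\cap(V(Q)\cap X)=\emptyset$ and guarantees that every vertex of $P\cap X$ is adjacent to every vertex of $Q\cap X$.

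Now I would splice $P^M$ and $Q^N$ into a single path $R$ with $V(R)=V(P^M)\cup V(Q^N)$. If $k=1$, then $P^M$ has a unique endpoint $z\in X$ and $Q^N$ has a unique endpoint $w\in X$; these are distinct, so $zw\in E(G)$, and the concatenation of $P^M$, the edge $zw$ and $Q^N$ is the required path. If $k=0$, then $P^M$ has both endpoints $z,z'$ in $X$ while $Q^N$ has neither; but $Q$ meets $M$ and has both endpoints in $N$, so it has an internal maximal subpath inside $M$, whose bridge edge $ww'$ in $Q^N$ satisfies $w,w'\in X$; deleting $ww'$ from $Q^N$ and reconnecting via the edge $wz$, then $P^M$ traversed from $z$ to $z'$, then the edge $z'w'$, yields a path on the full vertex set. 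Either way $|V(R)|=(m(P)+x(P))+(n(Q)+x(Q))$. Running the same construction with the roles of $P$ and $Q$ exchanged --- legitimate because $f(P,M)=f(Q,M)$ --- produces a path $R'$ with $|V(R')|=(m(Q)+x(Q))+(n(P)+x(P))$.

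Adding the two counts, $|V(R)|+|V(R')|=|V(P)|+|V(Q)|+x(P)+x(Q)\ge|V(P)|+|V(Q)|+2$, and since $P$ and $Q$ are detours $|V(P)|=|V(Q)|$; hence one of $R,R'$ is a path on at least $|V(P)|+1$ vertices, longer than a detour --- a contradiction. I expect the main obstacle to be the splicing step and its bookkeeping: verifying that the compressions are genuine (non-trivial) paths, determining how many of their endpoints lie in $X$ as a function of $k$, and in particular the case $k=0$ (which also settles $k=2$), where no endpoint-to-endpoint join is possible and one must insert one compressed path into an interior clique-edge of the other. The potential degeneracies --- a compression collapsing to a single vertex, or $Q^N$ having no interior clique-edge --- would each force $P$ or $Q$ to miss $M$ or $N$, and are therefore ruled out by hypothesis~(2).
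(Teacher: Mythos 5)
Your proposal is correct and follows essentially the same argument as the paper: compress each detour onto each side of the separation by bridging across the clique $X$, splice the $M$-part of one with the $N$-part of the other (handling the endpoint-count cases $k=1$ and $k\in\{0,2\}$ exactly as the paper does), and derive a contradiction from the fact that the vertices of $P\cap X$ and $Q\cap X$ are double-counted. The only differences are cosmetic (your WLOG swap of $M$ and $N$ versus the paper's WLOG on which compressed path has both ends in $X$).
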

\begin{proof}
Suppose otherwise for the sake of a contradiction, that is, there exist detours $P$ and $Q$ satisfying the three properties but also  $(P \cap X) \cap (Q \cap X) = \emptyset$. Note that since neither $P$ nor $Q$ end in $X$ and since every path has two ends, it follows $f(P,N)=f(Q,N)$ also. We construct a path $P_M$ as follows:
\begin{itemize}
\item Consider the graph $G-N$ and the path $P$ inside this graph. Since $P$ intersects $N$, $G-N$ will not contain all of $P$. It is possible that $P-N$ is a set of subpaths of $P$, and as such the number of end vertices of $P-N$ may not be two. However note that the number of end vertices of the subpaths in $P-N$ that are contained in $M$ is still $f(P,M)$; all other end vertices must be in $X$. Since $P$ enters $N$, there is at least one end vertex in $X$; if $f(P,M) \neq 1$ then there are at least two end vertices in $X$ (as exactly one end vertex only occurs when $P$ starts in $M$ and ends in $N$).
\item Since $X$ is a clique, we can join the subpaths of $P-N$ together using edges of $X$ to create one long path. Call this $P_M$. Note again that $f(P_M,M)=f(P,M)$. If $f(P,M)=0$ then $P_M$ has two end vertices in $X$, if $f(P,M)=1$ then $P_M$ has one end vertex in $X$, and if $f(P,M)=2$ then $P_M$ has no end vertices in $X$ but since $P$ enters $N$ at least one edge of $X$ is in $P_M$. 
\end{itemize}
Construct $P_N,Q_M$ and $Q_N$ equivalently. Note that since $P \cap Q \cap X = \emptyset$, it follows $P_M \cap Q_N = \emptyset$ and $Q_M \cap P_N = \emptyset$. Also note $f(P_M,M) + f(Q_N,N) = f(P,M) + f(Q,N) = f(P,M) + f(P,N) = 2$, and also $f(Q_M,M)+f(P_N,N) =2$.
We now construct a new path $I$. If $f(P_M,M)=f(Q_N,N)=1$, then create $I$ by linking the endvertex of $P_M$ in $X$ and the endvertex of $Q_N$ in $X$ via an edge of $X$. Since  $P_M \cap Q_N = \emptyset$, $I$ will be a path. Alternatively without loss of generality $f(P_M,M) = 0$ and $f(Q_N,N) = 2$. Now create $I$ by deleting an edge of $Q_N$ in $X$ and attaching these new end vertices to the two end vertices of $P_M$ in $X$. Again this will be a path. Also create a second path $I'$ in the same way using $Q_M$ and $P_N$. Now every vertex of $P$ and $Q$ is in $I \cup I'$. However, let $v$ be a vertex of $P \cap X$. The vertex $v$ is in $P_M$ and $P_N$ and so is in both $I$ and $I'$. Thus $|I| + |I'| > |P| + |Q|$. But since $P,Q$ have maximum length, $|I| + |I'| \leq |P|+|Q|$, a contradiction.
\end{proof}


\section{Proof of Theorem~\ref{theorem:chordal}}
We now provide a proof of Theorem~\ref{theorem:chordal}. This proof proceeds in several stages. First, we show that a connected chordal graph $G$ contains a special kind of clique, in which we shall find our traversal.

Note that by Lemma~\ref{lemma:tdcliques} and the Helly property of subtrees, there must exist at least one total clique in $G$.
We define a \emph{central clique} $X$ to be a total clique that satisfies at least one of the following properties:
\begin{enumerate}
\item $X$ is a total clique such that there is no detour $P$ which is small and touching with respect to $X$, but there is a small crossing detour,
\item $X$ is a total clique such that there exist two detours $P,Q$ that are both small and touching with respect to $X$, and have different home components,
\item $X$ is a total clique such that either $|X| \leq 4\ceil{\tfrac{\omega(G)}{5}}$ or there are no small detours at all with respect to $X$.
\end{enumerate}
We will use the above numbers to refer to the different types of central cliques. 

\begin{lemma}
\label{lemma:central}
A connected chordal graph $G$ has a central clique.
\end{lemma}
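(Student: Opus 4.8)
The plan is to show that if $G$ has no central clique of types~1 or~3, then it must have one of type~2, by starting from any total clique and repeatedly moving to a ``better'' one along the tree decomposition until the process is forced to stop at a type~2 clique. We know at least one total clique exists. Fix a tree decomposition $(T,\mathcal B)$ of minimum width in which every bag is a clique (Lemma~\ref{lemma:tdcliques}); since cliques contained in bags are also total/non-total in a compatible way, we may as well look for a central clique among the bags of $T$, so $|X| \le \omega(G)$ always holds. In particular, if $\omega(G) \le 4\ceil{\tfrac{\omega(G)}{5}}$ — i.e.\ if $\omega(G)$ is small enough that $|X| \le 4\ceil{\tfrac{\omega(G)}{5}}$ automatically — then any total bag is already central of type~3 and we are done; so we may assume $\omega(G)$ is large. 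Likewise, if for some total bag there are no small detours at all, that bag is central of type~3. So assume henceforth that every total bag $X$ admits a small detour, and $|X| > 4\ceil{\tfrac{\omega(G)}{5}}$.

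Next I would set up the ``descent''. Suppose $X$ is a total bag for which (1) fails, i.e.\ there \emph{is} a small touching detour with respect to $X$ (if not, then since there is also a small crossing detour — we must check this, as otherwise the ``but'' clause of type~1 could fail; actually if \emph{no} small detour touches and \emph{no} small detour crosses then there are no small detours, contradicting our assumption — so (1) holds and $X$ is central). So assume $X$ has a small touching detour $P$ with home component $C$ of $G-X$. If some small touching detour has a \emph{different} home component from $P$, then $X$ is central of type~2 and we stop. Otherwise every small touching detour has home component $C$. Now consider the edge of $T$ leaving $X$ toward the subtree whose bags lie in $C$, and let $X'$ be the neighbouring bag on that side. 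The key claim to establish is that $X'$ is again total, and that passing from $X$ to $X'$ makes measurable progress — the natural potential is that all small touching detours of $X$ now lie (in their relevant part) deeper in the subtree containing $C$, so that a small touching detour of $X'$, if it exists, has a home component that is ``closer'' to $X'$ in $T$ than $C$ was to $X$; since $T$ is finite this cannot continue forever.

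The main obstacle, and the step that will need the most care, is controlling what happens to detours when we move from $X$ to the adjacent bag $X'$: I must rule out that $X'$ is non-total, and I must rule out that $X'$ reintroduces a small touching detour on the ``wrong'' side (which would break monotonicity of the potential). This is exactly where Lemma~\ref{lemma:paste} is designed to be used — it forces two detours that cross a cut-clique with the same end-distribution to share a vertex in the clique, which is how one shows totality is preserved and how one bounds $|X \cap P|$ for the relevant detours. I would argue: since all of $X$'s small touching detours have home component $C$ and $X$ is total, any detour avoiding $X'$ would have to live on the far side, but then it must cross $X$ (as $X$ is total) or be touching $X$ with home component $\ne C$, and in either case Proposition~\ref{prop:extend} together with Lemma~\ref{lemma:paste} yields a contradiction with the minimality/length of detours. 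If a bad small touching detour of $X'$ appears with home component not pointing back toward $X$, then it is also a touching detour of $X$ (its vertices outside $X$ being confined appropriately), contradicting the assumption that all of $X$'s small touching detours have home component $C$ — giving the desired strict decrease. Finally, when the descent halts, it halts precisely because the current bag is central of type~1 or type~2, completing the proof.
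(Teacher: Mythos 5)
Your high-level idea---send a canonical arrow out of each bag of the tree decomposition and argue the process must terminate at a central clique---is the same germ as the paper's proof, but the paper packages it as ``every bag has outdegree one in a tree, hence some edge is a directed $2$-cycle'' precisely because the two claims your descent leans on are the ones that fail. The most serious problem is your reduction ``we may as well look for a central clique among the bags of $T$'' together with the closing assertion that when the descent halts ``the current bag is central of type 1 or type 2.'' In the terminal configuration---two adjacent total bags $B$ and $B'$ whose small touching detours all point at each other---neither $B$ nor $B'$ need be central; the central clique one extracts is $X=B\cap B'$, which is generally not a bag. Showing $X$ is central of type 2 is not automatic: one must prove that the small touching detour $P$ of $B$ avoids $B\setminus X$, which the paper does by observing that otherwise $P$ contains an edge of $B$, hence all of $B$ by Proposition~\ref{prop:extend}, and then smallness of $P$ forces $|B|\le\lceil\omega(G)/5\rceil$, making $B$ central of type 3. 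None of this is in your sketch, and it is exactly the content of the lemma.

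Second, the steps you defer are not minor. You never define the potential that is supposed to decrease, and your monotonicity claim---that a small touching detour of $X'$ whose home component lies away from $X$ is a small touching detour of $X$ with home component different from $C$---is unjustified: such a detour may use vertices of $X'\setminus X$, and its home component relative to $X$ can perfectly well be $C$ itself, in which case no contradiction arises. You also propose to ``rule out that $X'$ is non-total,'' but this is neither clearly true nor necessary: the paper instead assigns an arrow to non-total bags as well (toward the component containing a detour that misses the bag) and then disposes of the three cases at the $2$-cycle (two non-total bags give two disjoint detours; two total bags give a type-2 clique $B\cap B'$; the mixed case gives a detour missing $B'$ that cannot meet the small touching detour of $B$). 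The phrase ``Proposition~\ref{prop:extend} together with Lemma~\ref{lemma:paste} yields a contradiction'' is a placeholder, not an argument. As written, the proposal is an outline of the right kind of argument with the essential content missing.
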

\begin{proof}
Suppose no central clique exists. 
Fix a tree decomposition $T$ of $G$ such that each bag is a clique and such that for any two adjacent bags $X$ and $Y$ neither $X \subseteq Y$ nor $Y \subseteq X$. (The first property follows from $G$ being a chordal graph, and the second is achieved by contracting an edge in the tree decomposition whenever the property is violated.) 
We construct a set of directed edges $D$ with one going out of each bag $B$ as follows:
\begin{itemize}
\item If $B$ is a total clique, then since $B$ is not a central clique, there must be at least one small touching detour, and all small touching detours have the same home component, which we call $C$. Direct the arc from $B$ towards the subtree of $T-B$ containing $C$.
\item If $B$ is a non-total clique, there is some detour $P$ that does not intersect $B$, and so there is a component $C$ of $G-B$ such that $P \subseteq C$. The component $C$ is entirely within a single subtree of $T-B$, and so we construct an arc from $B$ towards its neighbour in said subtree. Note that even if several detours do not intersect $B$, they must be inside the same component since any two detours intersect, and so our arc is well-defined.
\end{itemize}

Using the bags of $T$ as nodes and the directed edges $D$ we construct a directed graph where the underlying graph is acyclic and every node has outdegree exactly one. Thus there exists at least one 2-cycle.
%
%
%
%
Let $e$ be the underlying edge of the tree-decomposition corresponding to a $2$-cycle, and label the bags at the end vertices $B$ and $B'$. The edge $e$ splits $T$ into two sides, which we label left and right such that $B$ is on the left and $B'$ is on the right. 

There are now three possibilities to consider. Firstly, suppose that $B$ and $B'$ are both non-total cliques. So there exists a detour $P$ that does not intersect $B$ and is entirely on the right, and a detour $Q$ that does not intersect $B'$ and is entirely on the left. But then $P \cap Q = \emptyset$, which cannot occur as detours pairwise intersect. 

Secondly, suppose that $B$ and $B'$ are both total cliques. Let $P$ be a small touching detour with respect to $B$, which will have its home component to the right of $e$. Label the home component of $P$ by $C$. Let $Q$ be a small touching detour with respect to $B'$ with its home component to the left of $e$. Define $X=B \cap B'$. Suppose $P$ contains a vertex $v$ in $B-X$. Now every vertex of $P$ is contained within $B$ or $C$, but $v$ has no neighbours in $C$ as $v$ and $C$ are separated by $X$. Thus, since $v$ is incident to some edge in $P$, the vertex $u$ at the other end of this edge must be in $B$, and so all of $B$ is in $P$ by Proposition~\ref{prop:extend}. Since $P$ is small with respect to $B$, it follows $B$ is a central clique of type 3. Hence $P$ does not visit $B-X$, and similarly $Q$ does not visit $B'-X$. 
Thus both $P$ and $Q$ are small and touching with respect to the clique-cut $X$. The detours $P,Q$ still have different home components (to the right and left of $e$ respectively) and so $X$ is a central clique of type 2.

Thirdly, suppose (without loss of generality) that $B$ is total and $B'$ is non-total. Let $P$ be a small touching detour with respect to $B$, which thus has its home component to the right of $e$. Let $Q$ be a detour that does not intersect $B'$ (since it is non-total). Note $Q$ intersects $B$, and is entirely to the left of $e$. Again, let $X := B \cap B'$. As in the previous case, if $P$ contains a vertex $v$ in $B-X$, then all of $B$ is in $P$ and $B$ is a central clique of type 3. Hence we suppose that $P$ does not intersect $B-X$. However, $P$ and $Q$ must intersect, and $Q$ does not intersect $B'$ and therefore does not intersect $X$, which leaves nowhere for the two detours to intersect. This gives our final contradiction.
\end{proof}

We now let $X$ be a central clique as guaranteed by Lemma~\ref{lemma:central}. If $X$ is a central clique of type 3, then either $X$ itself or any subset of $X$ of size $4\ceil{\tfrac{\omega(G)}{5}}$ satisfies the requirements for Theorem~\ref{theorem:chordal}. So we assume that $X$ is a central clique of type 1 or type 2, and not type 3. 
%
The next step is to construct a special set of detours $\mathcal{F}$ which will help us find our transversal. We do so via the following algorithm:
\begin{enumerate}
\item Initialise $\mathcal{F} = \emptyset$.
\item Consider the small, closed, crossing detours with respect to $X$.
\begin{enumerate}
\item If no such detours exist, go to Step 3.
\item Otherwise, if all such detours have the same home component, add one of them to $\mathcal{F}$, then go to Step 3.
\item Otherwise, there exist small closed crossing detours with different home components. If there exists two such detours with different home components that do not intersect in $X$, add them both to $\mathcal{F}$. Go to Step 3.
\item Finally, there exist small closed crossing detours with different home components, but they all pairwise intersect in $X$. Add one such pair $P,Q$ to $\mathcal{F}$, then go to Step 3.
\end{enumerate}
\item Now consider small touching detours with respect to $X$. If none exist, go to Step 4, otherwise go to Step 5.
\item If there exists a small open crossing detour, add it to $\mathcal{F}$ and finish.
\item Add two small touching detours $P,Q$ with different home components to $\mathcal{F}$ and finish.
\end{enumerate}

Note that if the algorithm reaches Step 5, $X$ had at least one small touching detour (from the condition in Step 3). Thus $X$ was a central clique of type 2, so it has two small touching detours with different home components. Hence Step 5 is a valid operation. All other steps of the algorithm are self-evidently valid.

From the construction of $\mathcal{F}$ it is clear that every detour in $\mathcal{F}$ is small, and $\mathcal{F}$ contains at most four detours (e.g.~if Steps 2(c) and 5 operate). Also, no detour $P \in \mathcal{F}$ has an end vertex in $X$, otherwise by Proposition~\ref{prop:extend} all of $X$ is in $P$ and so, since $P$ is small, $X$ is a central clique of type 3. Initially define $F := \bigcup_{P \in \mathcal{F}} P \cap X$. Since $\mathcal{F}$ contains at most four small detours, it follows $|F| \leq 4\ceil{\tfrac{\omega(G)}{5}}$ and since $F \subseteq X$, $F$ is itself a clique of $G$. If $|F| < 4\ceil{\tfrac{\omega(G)}{5}}$, then add other vertices of $X$ to $F$ in order to force $|F| = 4\ceil{\tfrac{\omega(G)}{5}}$ while maintaining the fact that $F$ is a clique. Our final step is to show that $F$ is the transversal we require.

\begin{claim} 
$F$ is a transversal for the set of detours in $G$.
\end{claim}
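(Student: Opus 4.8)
Here $R$ will denote an arbitrary detour of $G$, and the plan is to show $R\cap F\neq\emptyset$. Since $X$ is total, $R\cap X\neq\emptyset$, and as $F\subseteq X$ it suffices to show that $R\cap X$ contains a vertex of $F$. First I would settle the \emph{large} detours by counting: as $X$ is not a central clique of type~3, $|X|>4\ceil{\tfrac{\omega(G)}{5}}=|F|$, and since $|X|\le\omega(G)\le 5\ceil{\tfrac{\omega(G)}{5}}$ this gives $|X\setminus F|\le\ceil{\tfrac{\omega(G)}{5}}$; a large detour has more than $\ceil{\tfrac{\omega(G)}{5}}$ vertices of $X$ and therefore cannot keep all of them out of $F$. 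For a \emph{small} detour $R$ I would argue by contradiction, assuming $R\cap F=\emptyset$. Then $R$ does not end in $X$ (otherwise $X\subseteq R$ by Proposition~\ref{prop:extend} and $R$ is large), so neither $R$ nor any member of $\mathcal F$ ends in $X$; moreover $R$ is closed touching, closed crossing, or open crossing, and for each $P\in\mathcal F$ the non-empty set $R\cap P$ misses $X$, hence lies in a single component of $G-X$.

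If $R$ is closed touching, I would use that a touching detour with home component $H$ lies in $H\cup X$: a small closed touching detour can only exist if the algorithm ran Step~5, which then put two small touching detours with distinct home components into $\mathcal F$, and since $R$'s home component differs from that of one of them, $R$ meets that detour in $X\subseteq F$, a contradiction (for a type~1 central clique no such $R$ exists). For the crossing cases the main engine is Lemma~\ref{lemma:paste}, applied with separations $(M,X,N)$ in which $M$ is a union of components of $G-X$. Two closed crossing detours with a common home component $H$ meet in $X$ (take $M=H$, so both have $f(\cdot,M)=2$); two open crossing detours meet in $X$ (choose $M$ containing exactly one endpoint-component of each, possible since each has its ends in two distinct components, so both have $f(\cdot,M)=1$). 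Applied to the detours that Steps~2 and~4 place in $\mathcal F$, these handle small closed crossing detours when Step~2 took branch~(a), (b) or~(d) --- in branch~(b) all of them share a home component with the one chosen, and in branch~(d) every two with distinct home components meet in $X$ --- and all small open crossing detours whenever Step~4 ran.

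The main obstacle, containing essentially all of the work, is to close the two remaining ``interaction'' cases: (i)~a small closed crossing detour $R$ in branch~2(c) whose home component differs from those of the two detours $P_2,Q_2$ chosen there; and (ii)~a small open crossing detour when Step~5, rather than Step~4, was executed. For~(i), one first uses Lemma~\ref{lemma:paste} on the pair $P_2,Q_2$ --- chosen \emph{not} to meet in $X$ --- to conclude that one of them, and then (applying the lemma to $(R,P_2)$ and $(R,Q_2)$) each of the three detours, has all of its non-$X$ vertices inside the union of just two components of $G-X$, in a cyclic pattern: $R$ and $P_2$ share their common component, $P_2$ and $Q_2$ share theirs, and $Q_2$ and $R$ share theirs. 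Splitting each detour along $X$ and re-pasting the pieces (as in the proof of Lemma~\ref{lemma:paste}) then produces, for each of these three pairs, a path that by the maximality of detours has at most $L$ vertices, where $L$ is the common order of the detours; summing the three inequalities gives $3L-(|R\cap X|+|P_2\cap X|+|Q_2\cap X|)\ge 3L$, contradicting that each detour meets the total clique $X$. Case~(ii) requires a similar self-contained path-surgery argument relating a small open crossing detour to the touching pair in $\mathcal F$ (and to whatever Step~2 contributed), once more contradicting maximality; getting this right, together with checking that in every branch of the algorithm the at most four chosen detours really cover all small detours, is the technical heart of the proof.
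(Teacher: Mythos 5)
Your overall strategy coincides with the paper's: the same counting argument for large detours, the same home-component argument for touching detours, the same applications of Lemma~\ref{lemma:paste} to crossing detours that share a home component or are both open, the same ``would have been a legitimate choice'' argument for branch 2(d), and the same three-path surgery with the cyclic containment pattern for branch 2(c). Where you commit to details they are correct; your accounting in case (i) even makes explicit that all three of $P_2\cap X$, $Q_2\cap X$, $R\cap X$ are double-counted, which the paper states only for one of them.

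The one genuine gap is your case (ii), which you explicitly defer: a small open crossing detour $R$ when Step~5 rather than Step~4 was executed, so that $\mathcal F$ contains a touching pair $P,Q$ and no open crossing detour. Note that Lemma~\ref{lemma:paste} cannot be applied to the pair $(R,Q)$ here, since a touching detour fails the hypothesis of meeting both sides of the separation; some new argument really is required, and ``a similar path-surgery'' is not yet a proof. The paper closes this case with a two-path half-surgery rather than the three-path one: choose the separation $(M,X,N)$ so that the home component of $P$ lies in $M$, that of $Q$ lies in $N$, and $R$ has one end on each side (achievable since $R$ is open); form $R_M$ and $R_N$ by deleting one side of $R$ and re-pasting the remaining subpaths through edges of $X$, and keep the one with more than $\tfrac{1}{2}|R|$ vertices, say $R_M$; take the subpath $Q'$ of $Q$ from a vertex of $Q\cap X$ to an end of $Q$ having more than $\tfrac{1}{2}|Q|$ vertices. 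Then $R_M\subseteq M\cup X$ and $Q'\subseteq N\cup X$ each have exactly one end in $X$ and can only meet inside $X$; if they were disjoint, joining their $X$-ends by an edge of $X$ would produce a path with more than $\tfrac{1}{2}|R|+\tfrac{1}{2}|Q|=|R|$ vertices, contradicting maximality. Hence $R$ meets $Q$ in $X\subseteq F$. This is the piece your write-up is missing; your parenthetical suggestion that Step~2's contribution is also needed in this case is not borne out --- only the touching pair is used.
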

\begin{proof}
Let $R$ be a detour in $G$. There are two cases to consider: either $R$ is large with respect to $X$ or $R$ is small with respect to $X$.
If $R$ is large with respect to $X$, then $|R \cap X| \geq \ceil{\tfrac{\omega(G)}{5}} + 1$. If $R \cap F = \emptyset$, then $$|X| \geq |F| + |R \cap X|\geq 4\ceil{\tfrac{\omega(G)}{5}} + \ceil{\tfrac{\omega(G)}{5}} + 1 \geq \omega(G) + 1,$$ which is a clear contradiction. Hence we may assume that $R$ is small with respect to $X$. We may also assume that $R \not\in \mathcal{F}$, since that case is clear. 
Suppose $R$ is a touching detour. Thus by Step 5, $\mathcal{F}$ contains two small touching detours $P,Q$ with different home components. Without loss of generality, suppose that the home component for $P$ is different to the home component for $R$. Since $P$ and $R$ are touching detours and need to intersect while having different home components, it follows they intersect in $X$ itself, and thus $R$ intersects $F$.
Hence we may assume that $R$ is a crossing detour. The first subcase we will now consider is the case when $R$ is open and no touching detours exist. Thus by Step 4 and the existence of $R$ there exists a small open crossing detour $P \in \mathcal{F}$. We construct a separation $(M,X,N)$ where the detours $P$ and $R$ both have one endvertex in $M$ and one in $N$. Thus by Lemma~\ref{lemma:paste} it follows $(P \cap X) \cap (R \cap X) \neq \emptyset$, and hence $R$ intersects $F$.

The next subcase is when $R$ is open and crossing but some touching detours exist. Then by Step 5 there exist two small touching detours $P,Q \in \mathcal{F}$ with different home components. Partition the components of $G-X$ into $M$ and $N$ such that the home component of $P$ is in $M$, the home component of $Q$ is in $N$ and both $M$ and $N$ contain one endvertex of $R$; since $R$ is open this partition is achievable. Let $R_M$ be the subpath of $R$ created by deleting $N$ and pasting the remaining subpaths of $R$ together using the edges of $X$. Define $R_N$ analogously, and without loss of generality we say $|R_M| > \tfrac{1}{2}|R|$. Let $Q'$ be a subpath of $Q$ with one endvertex in $N$, one endvertex in $X$ such that $|Q'| > \tfrac{1}{2}|Q|$. If $R_M$ and $Q'$ do not intersect in $X$ then they can be concatinated to create a longer detour, if they do intersect in $X$ then so do $R$ and $Q$, and so $R$ intersects $F$. These subcases account for when $R$ is open and crossing. 

Finally we need to account for when $R$ is closed and crossing. Since $R$ exists, Step 2 will add some detours to $\mathcal{F}$. First suppose that all of the small, closed crossing detours have the same home component, and let $P$ be the detour Step 2(b) added to $\mathcal{F}$. Now let $M$ be the home component of $P$ (and $R$), and let $N$ be all other components of $G-X$. Thus by Lemma~\ref{lemma:paste} applied to the separation $(M,X,N)$ it follows that $P$ and $R$ intersect in $X$, and thus $R$ intersects $F$. 

Hence we may assume there exists a pair of small closed crossing detours $P,Q$ in $\mathcal{F}$ with different home components. Suppose $P,Q$ do not intersect in $X$ (that is, Step 2(c) occurred). Clearly we may also assume that $R$ does not intersect either $P$ or $Q$ in $X$. Label the home components of $P,Q,R$ by $C_P,C_Q$ and $C_R$ respectively, and note $C_P \neq C_Q$. If $C_P = C_R$, then let $M = C_P$ and $N = G - X - C_P$ and apply Lemma~\ref{lemma:paste} to $(M,X,N)$, so that $P$ intersects $R$ in $X$. So we assume $C_P \neq C_R$ and similarly that $C_Q \neq C_R$, that is, that $C_P,C_Q,C_R$ are three distinct components. Let $M = C_P \cup C_Q$ and $N = G-X-(C_P \cup C_Q)$. If both $P$ and $Q$ intersect $N$ then by Lemma~\ref{lemma:paste} the paths $P$ and $Q$ must intersect in $X$, which they do not. Hence without loss of generality we suppose $P$ does not intersect $N$, that is, $P \subset C_P \cup C_Q \cup X$. Since $P$ is crossing it must intersect both $C_P$ and $C_Q$. By a similar argument one of $P$ and $R$ cannot intersect $G-X-(C_P \cup C_R)$; since $P$ intersects $C_Q$ it must be that $R \subset C_P \cup C_R \cup X$ and that $R$ intersects $C_P$ and $C_R$. Again, by a similar argument one of $Q$ and $R$ cannot intersect $G-X-(C_Q \cup C_R)$, and given previous results it must be $Q \subset C_Q \cup C_R \cup X$ and that $Q$ intersects $C_Q$ and $C_R$.

We will now argue in a similar fashion to Lemma~\ref{lemma:paste} to construct a longer detour. Consider the subpaths of $P \cap (C_P \cup X)$. All of the end vertices of these subpaths are in $X$ itself except for two inside $C_P$, and since $P$ crosses $X$ there are at least two subpaths in $P \cap (C_P \cup X)$. Create $P'$ by connecting the subpaths of $P \cap (C_P \cup X)$ using the edges of $X$; because $P \cap (C_P \cup X)$ has at least two subpaths it follows that $P'$ will contain at least one edge of $X$. Similarly, consider the subpaths of $P \cap (C_Q \cup X)$ and connect them up using edges of $X$ to create a path $P''$. Note that $P''$ will have both end vertices in $X$.

Construct $Q',Q'',R',R''$ in the same way (substituting different components as appropriate). Finally, construct the path $P^*$ from $P'$ and $Q''$ by removing an edge $e$ of $P'$ in $X$ and adding the edges from the two endvertices of $e$ to the endvertices of $Q''$. This will be a path since $P' \subset C_P \cup X$ and $Q'' \subset C_R \cup X$ and $P,Q$ do not intersect in $X$. Construct $Q^*$ from $Q'$ and $R''$ and $R^*$ from $R'$ and $P''$ in the same fashion. Every vertex of $P,Q,R$ is used at least once in $P^*,Q^*,R^*$, but the vertices of $P \cap X$ are used twice (in both $P^*$ and $R^*$). Hence $|P^*| + |Q^*| + |R^*| > |P| + |Q| + |R|$, but since $P,Q,R$ are detours, it follows at least one of $P^*,Q^*,R^*$ is longer than a detour, a contradiction. This accounts for the case when there exists a pair of small closed crossing detours with different home components that do not intersect in $X$.

The final subcase we must consider is when every pair of small closed crossing detours with different home components has an intersection between the detours in $X$. (That is, Step 2(d) occurred.) Let $P,Q$ denote the pair we added to $\mathcal{F}$. Now at least one of these detours has a different home component to $R$; without loss of generality we suppose it is $P$. Then $P$ and $R$ would have been a legitimate choice of detours to add to $\mathcal{F}$ instead of $P$ and $Q$. But we know that for each valid choice the detours intersect in $X$, that is, $R$ must intersect $P$ in $X$ and is therefore intersects $F$. This completes all possible cases.
\end{proof}

\section{Longest cycles in chordal graphs}

In this section we discuss how to apply the same ideas as those used in the proof of Theorem~\ref{theorem:chordal} to prove the following result for longest cycles in chordal graphs. 

\newtheorem*{repeatcycles}{Theorem \ref{theorem:cycles}}
\begin{repeatcycles}
Given a $2$-connected chordal graph $G$, there exists a transversal of order at most $2\ceil{\tfrac{\omega(G)}{3}}$ that intersects every longest cycle of $G$. Moreover, this transversal induces a clique in $G$.
\end{repeatcycles}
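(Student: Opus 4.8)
The plan is to retrace the proof of Theorem~\ref{theorem:chordal} step for step, replacing ``detour'' by ``longest cycle'' and rescaling the constants, using $2$-connectivity to supply --- in place of the elementary fact that two detours of a connected graph meet --- that any two longest cycles of $G$ share a vertex. The first ingredient is the cycle analogue of Proposition~\ref{prop:extend}: if a longest cycle $C$ uses an edge of a clique $X$, then $X\subseteq V(C)$, proved by replacing that edge by a $2$-edge path through a vertex of $X$ not on $C$. I would then declare a longest cycle $C$ \emph{small} with respect to $X$ when $|V(C)\cap X|\le\ceil{\tfrac{\omega(G)}{3}}$, keep the notions of \emph{touching} and \emph{crossing}, and let the \emph{home component} of a touching cycle be the unique component of $G-X$ meeting it; the ``open/closed'' dichotomy of the detour proof simply vanishes, since cycles have no end vertices. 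The second ingredient is the cycle analogue of Lemma~\ref{lemma:paste}: if $(M,X,N)$ is a separation with $X$ a clique-cut and $C,D$ are longest cycles each meeting both $M$ and $N$, then $(C\cap X)\cap(D\cap X)\neq\emptyset$. Its proof copies Lemma~\ref{lemma:paste} but is shorter, since there is no $f(\cdot,\cdot)$ bookkeeping: supposing the intersection empty, paste $C\cap(M\cup X)$ into a cycle $C_M$ with edges of $X$ (it contains at least one, as $C$ meets $M$), form $C_N,D_M,D_N$ likewise, and splice $C_M$ with $D_N$ and $D_M$ with $C_N$ by deleting one edge of $X$ from each and adding two edges of $X$; this yields cycles $I,I'$ with $|I|+|I'|=|C|+|D|+|V(C)\cap X|+|V(D)\cap X|>|C|+|D|$, contradicting maximality. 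I would single out the corollary --- which replaces the Step~2 case analysis of Theorem~\ref{theorem:chordal} --- that \emph{any} two longest cycles that both cross $X$ meet inside $X$, since one can always split the components of $G-X$ into two nonempty groups so that each cycle meets both.

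Next I would reprove the central clique lemma. Call a total clique $X$ (one met by every longest cycle) \emph{central} if: (1) there is no small touching longest cycle but there is a small crossing one; or (2) there are two small touching longest cycles with different home components; or (3) $|X|\le 2\ceil{\tfrac{\omega(G)}{3}}$, or there is no small longest cycle. The proof that $G$ has a central clique is the discharging argument of Lemma~\ref{lemma:central}, essentially verbatim: fix a clique tree decomposition (Lemma~\ref{lemma:tdcliques}) with no containment among adjacent bags, direct an arc out of each bag toward the component carrying a longest cycle the bag misses (non-total bags) or toward the common home component of the bag's small touching cycles (total non-central bags), find a doubly-directed tree edge $e=BB'$, and case on totality of $B,B'$ exactly as before; the cycle version of Proposition~\ref{prop:extend} does the work, in the ``both total'' subcase, of forcing the two witnessing cycles off $B\setminus(B\cap B')$ and $B'\setminus(B\cap B')$ so that $B\cap B'$ is central of type~2.

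With a central clique $X$ in hand: type~3 is handled by $X$ or any $2\ceil{\tfrac{\omega(G)}{3}}$-subset of it, which is a clique; so assume $|X|>2\ceil{\tfrac{\omega(G)}{3}}$, whence no small longest cycle uses an edge of $X$. For type~1, take for $F$ the trace on $X$ of one small crossing longest cycle, padded within $X$ to exactly $2\ceil{\tfrac{\omega(G)}{3}}$ vertices: every small longest cycle is crossing and meets that cycle inside $X$ by the corollary, while every large longest cycle $R$ meets $F$ since otherwise $|X|\ge|F|+|V(R)\cap X|\ge 2\ceil{\tfrac{\omega(G)}{3}}+\ceil{\tfrac{\omega(G)}{3}}+1>\omega(G)$. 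For type~2, take the two small touching cycles $P,Q$ with home components $C_P\neq C_Q$, set $F=(V(P)\cap X)\cup(V(Q)\cap X)$ and pad within $X$ to $2\ceil{\tfrac{\omega(G)}{3}}$: large cycles are caught by the same count; a small touching cycle has home component distinct from $C_P$ or from $C_Q$, hence meets $P$ or $Q$ inside $X$; and for a small crossing cycle $R$, if $R$ misses $C_P$ then $R$ and $P$ can only meet inside $X$ (and symmetrically for $C_Q$), so one may assume $R$ meets both $C_P$ and $C_Q$.

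The hard part is precisely this last subcase: the two-cycle budget leaves no room --- unlike in Theorem~\ref{theorem:chordal}, where $\mathcal{F}$ may hold four detours --- to also put a crossing cycle into the family, so $P$ and $Q$ must themselves absorb every crossing cycle. Here I would pick a separation $(M,X,N)$ with $C_P\subseteq M$, $C_Q\subseteq N$ and $R$ meeting both sides (possible as in the corollary), paste the larger of the two resulting pieces of $R$ into a cycle containing an edge of $X$, and splice it through $X$ with an arc, between two vertices of $X$, of whichever of $P,Q$ lies on the opposite side and contains more than half of that cycle's vertices; since $R$ misses $F\supseteq(V(P)\cup V(Q))\cap X$ these parts are disjoint, so the splice is a single cycle longer than $R$, a contradiction. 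The residual difficulty is the degenerate case in which the relevant one of $P,Q$ meets $X$ in a single vertex, so that no such arc exists; there I would fall back on $2$-connectivity --- each component of $G-X$ has at least two neighbours in $X$ --- to argue that the offending crossing cycle cannot even enter the relevant component (being forced otherwise to reuse an edge or to be itself touching), so this configuration does not occur. Checking that the construction never needs more than two small cycles, and that the counting and padding inequalities hold for every value of $\omega(G)$, is the remaining routine work.
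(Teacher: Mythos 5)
Your proposal follows the paper's proof essentially step for step: the same cycle analogues of Proposition~\ref{prop:extend} and Lemma~\ref{lemma:paste}, the same four-way definition of a central clique, the same discharging argument for its existence, and the same three-way construction of the transversal. The paper's own treatment is itself only a sketch of these modifications, and your sketch matches it.

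The one place you go beyond the paper is the degenerate subcase in which the touching cycle $P$ meets $X$ in a single vertex $v$, so that no arc of $P$ between two distinct vertices of $X$ exists. You are right that this needs attention, but your proposed resolution is backwards. You claim that $2$-connectivity forces the offending crossing cycle $R$ not to enter the home component $C_P$, so that the configuration does not occur. In fact the opposite holds: if $R$ avoids $F$ (hence avoids $v$) and also avoids $C_P$, then $R$ is disjoint from $P \subseteq C_P \cup \{v\}$, contradicting the fact that longest cycles in a $2$-connected graph pairwise intersect. So $R$ must enter $C_P$, the degenerate configuration is not excluded, and the splicing step still has to be carried out with only one vertex of $P$ available in $X$; your sketch does not supply that argument. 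This is a corner the paper's sketch also leaves implicit, but your patch as stated does not close it.
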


We will describe the modifications required to adapt the proof to longest cycles, rather than including a full proof of this result.
First note that longest cycles in $2$-connected graphs have the property that any two intersect~\cite{Gutierrez}, and that similarly to Proposition~\ref{prop:extend}, a longest cycle with an edge in a given clique must visit every vertex of that clique.

Cycles have no ends, so there is no need for the open/closed distinction used above. As before, given a clique-cut $X$ we say that a cycle $C$ \emph{touches} $X$ if $V(C) - X$ lies in a single component of $G-X$ (its home component), and otherwise it \emph{crosses} $X$.
We say a cycle $C$ is \emph{small} with respect to a clique $X$ if $|V(C) \cap X| \leq \ceil{\tfrac{\omega(G)}{3}}$.

Lemma~\ref{lemma:paste} can be replaced with a lemma that states that any two longest cycles crossing $X$ must intersect in $X$; otherwise the pieces of the cycles can be rearranged into two other cycles that also use some edges of $X$, contradicting the fact they were longest cycles.

For longest cycles, a \emph{central clique} is a total clique-cut that has either: a small crossing longest cycle but no small touching longest cycles; two small touching longest cycles with different home components; order at most $2\ceil{\tfrac{\omega(G)}{3}}$; or no small longest cycles at all.

To show there exists a central clique we argue as before. Assume there is none, and direct edges of the tree decomposition toward disjoint or small touching longest cycles. This means every bag has out degree one and so there is a directed $2$-cycle.
We consider the bags that constitute this 2-cycle, and distinguish cases by whether they are total clique-cuts or not. The argument proceeds exactly as in the proof of Lemma~\ref{lemma:central}.

Given a central clique $X$ we construct a transversal as follows. 
Firstly, if there is a small crossing longest cycle but no small touching longest cycles, we only need to take the vertices from one such longest cycle in $X$.
Secondly, if there are small touching longest cycles with different home components then we take the vertices of two such cycles $P$ and $Q$ in $X$.
Now any other touching longest cycle must hit $P$ or $Q$ in $X$ since it avoids one of the home components. 
If $R$ is a crossing longest cycle, at least half of $R$ is outside of one of the home components, say that of $P$. 
If $P$ and $R$ don't intersect in $X$ it is possible to take at least half of each of $R$ and $P$ and combine them into a longer cycle using edges of $X$.
Thirdly, if $X$ has order at most $2\ceil{\tfrac{\omega(G)}{3}}$ we take all of $X$. 
Finally, if $X$ is larger but has no small longest cycles at all, we take any $2\ceil{\tfrac{\omega(G)}{3}}$ vertices from $X$.
Thus we have a transversal of order at most $2\ceil{\tfrac{\omega(G)}{3}}$ as required.


\bibliography{detours}
\bibliographystyle{plain} 

\end{document}